\newtheorem{theorem}{Theorem}[section]
\newtheorem{lemma}[theorem]{Lemma}
\newtheorem{definition}[theorem]{Definition}
\newproof{proof}{Proof}
\numberwithin{equation}{section}
\numberwithin{theorem}{section}
\newcommand{\e}{\varepsilon}
\newcommand{\w}{\omega}
\newcommand{\IR}{\mathbb{R}}
\newcommand{\IF}{\mathbb{F}}
\newcommand{\TTT}{\mathcal{T}}
\newcommand{\KK}{\mathcal{K}}
\newcommand{\Ra}{\Rightarrow}
\newcommand{\cacx}{\overline{\mathrm{acx}}}
\newcommand{\CC}{C_k}
\begin{document}

\begin{frontmatter}

\title{A Banach space characterization of (sequentially) Ascoli spaces}

\author{S.~Gabriyelyan}
\ead{saak@math.bgu.ac.il}
\address{Department of Mathematics, Ben-Gurion University of the Negev, Beer-Sheva, P.O. 653, Israel}

\begin{abstract}
We prove that a Tychonoff space $X$ is an Ascoli space (resp., a sequentially Ascoli space) if and only if  for each  Banach space $E$, every $k$-continuous  and almost $k$-compact  (resp., almost $k$-sequential) map $T$ form $X$ into the Banach dual $E'$ of $E$ is continuous.
\end{abstract}

\begin{keyword}
Ascoli space \sep sequentially Ascoli space \sep $k$-continuous map

\MSC[2010] 46A3 \sep 54C10 \sep 54D99

\end{keyword}

\end{frontmatter}


\section{Introduction}


Let $X$ be a Tychonoff space. A map $S$ from $X$ to a Tychonoff space $Y$ is called {\em $k$-continuous} if each restriction $f{\restriction}_K$ of $f$ to any compact subset $K$ of $X$ is continuous.  We recall that $X$ is called a {\em $k_\IR$-space} if every $k$-continuous function $f:X\to \IR$ is continuous. Denote by $\CC(X)$ the space $C(X)$ of all real or complex valued continuous functions on $X$ endowed with the compact-open topology. It is well known that $X$ is a $k_\IR$-space if and only if $\CC(X)$ is complete.

One of the basic theorems in Analysis is the Ascoli theorem which states that if $X$ is a $k$-space, then every compact subset of $\CC(X)$ is evenly continuous, see Theorem 3.4.20 in \cite{Eng}. Being motivated by the Ascoli theorem we  introduced and studied in \cite{BG} the class of Ascoli spaces.
A Tychonoff space $X$ is called an {\em Ascoli space} if every compact subset $\KK$ of $\CC(X)$  is evenly continuous, that is the map $X\times\KK \ni(x,f)\mapsto f(x)$ is continuous. In other words, $X$ is Ascoli if and only if the compact-open topology of $\CC(X)$ is Ascoli in the sense of \cite[p.45]{mcoy}. One can easily show that $X$ is Ascoli if and only if every compact subset of $\CC(X)$ is equicontinuous. Recall that a subset $H$ of $C(X)$ is {\em equicontinuous} if for every $x\in X$ and each $\e>0$ there is an open neighborhood $U$ of $x$ such that $|f(x')-f(x)|<\e$ for all $x'\in U$ and $f\in H$.  In \cite{Noble} Noble proved that every $k_\IR$-space is Ascoli, however the converse is not true in general, see \cite{BG}.

Being motivated by the classical notion of $c_0$-barrelled locally convex spaces we defined in \cite{Gabr:weak-bar-L(X)}  a Tychonoff space $X$ to be {\em sequentially Ascoli} if every convergent sequence in $\CC(X)$ is equicontinuous. Clearly, every Ascoli space is sequentially Ascoli, but the converse is not true in general (every non-discrete $P$-space is sequentially Ascoli but not Ascoli, see Proposition 2.9 in \cite{Gabr:weak-bar-L(X)}). Ascoli and sequentially Ascoli spaces in various classes of topological, function and locally convex spaces are thoroughly studied in
\cite{BG,Gabr-LCS-Ascoli,Gabr-reflex-L(X),Gabr-seq-Ascoli,Gabr-pseudo-Ascoli}.

A map $T$ from a Tychonoff space $X$ to a locally convex space $E$ is called {\em bounded} if $T(X)$ is a bounded subset of $E$. The space $E$ endowed with the weak topology is denoted by $E_w$. The next characterization of sequentially Ascoli spaces is proved in the recent article \cite{Gabr-lc-Ck}.
\begin{theorem} \label{t:Ck-locally-complete}
A Tychonoff space $X$ is  sequentially Ascoli if and only if every $k$-continuous  bounded map $T:X\to c_0$ is continuous whenever it is continuous as a map from $X$ into  $(c_0)_w$.
\end{theorem}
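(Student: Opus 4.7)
The plan is to prove both directions by connecting convergent sequences in $\CC(X)$ with bounded maps into $c_0$ via the coordinate map $T(x)=(f_n(x))_n$.

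For the forward implication, assume $X$ is sequentially Ascoli and let $T\colon X\to c_0$ be bounded, $k$-continuous, and continuous as a map into $(c_0)_w$. Put $f_n:=e_n^\ast\circ T$, where $e_n^\ast\in c_0'$ is the $n$th coordinate functional; each $f_n$ is continuous by the weak continuity of $T$. The first step is to show $f_n\to 0$ in $\CC(X)$: for every compact $K\subseteq X$, the $k$-continuity of $T$ makes $T(K)$ a compact subset of $c_0$, and compact subsets of $c_0$ have uniformly vanishing tails, so $\sup_K|f_n|\to 0$. Since $X$ is sequentially Ascoli, $\{f_n\}$ is then equicontinuous, and a standard $\e/3$-argument at any fixed $x\in X$ --- splitting the index set into finitely many $n<N$ handled by continuity of each individual $f_n$, and $n\geq N$ handled by pointwise smallness of $|f_n(x)|$ together with equicontinuity --- yields continuity of $T$ at $x$.

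For the converse, I argue by contrapositive. Suppose a sequence $\{f_n\}$ converges to some $f$ in $\CC(X)$ but is not equicontinuous; replacing $f_n$ by $f_n-f$, I may assume $f_n\to 0$. The goal is to build $T\colon X\to c_0$ satisfying the hypotheses of the theorem but failing to be continuous. To secure uniform boundedness of the image, compose with the truncation $\psi(t)=t$ for $|t|\leq 1$, $\psi(t)=\sgn(t)$ otherwise, replacing $f_n$ by $g_n:=\psi\circ f_n$, and set $T(x):=(g_n(x))_n$. Since $|g_n|\leq 1$, the map $T$ is bounded by $1$; the range lies in $c_0$ as $g_n(x)\to 0$ pointwise; $T$ is $k$-continuous because $g_n\to 0$ uniformly on each compact $K$ (on such a $K$ eventually $g_n=f_n$); and weak continuity of $T$ follows from the Weierstrass $M$-test applied to $\sum a_n g_n$ for $a=(a_n)\in\ell_1$.

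The main obstacle is to check that the truncation preserves the failure of equicontinuity at the offending point $x_0$. I would use that $f_n(x_0)\to 0$, so $|f_n(x_0)|<1/2$ for $n\geq N_0$; by continuity of the finitely many initial $f_n$'s, any jump $|f_n(x)-f_n(x_0)|\geq\e$ occurring inside a sufficiently small neighborhood of $x_0$ must come from $n\geq N_0$; and for such $n$, either $|f_n(x)|\leq 1$ so $g_n=f_n$ at both points and the jump is preserved, or $|f_n(x)|>1$ so $|g_n(x)-g_n(x_0)|\geq 1-1/2=1/2$. This yields non-equicontinuity of $\{g_n\}$ at $x_0$ with constant $\min(\e,1/2)$, hence $\|T(x)-T(x_0)\|_\infty\geq\min(\e,1/2)$ for points $x$ arbitrarily close to $x_0$, contradicting the hypothesis that $T$ is continuous.
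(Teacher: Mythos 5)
Your proposal is correct and takes essentially the same route as the paper: the forward direction reduces to equicontinuity of the coordinate null sequence $\{f_n\}$, and the converse runs the contrapositive by truncating a non-equicontinuous null sequence to get a bounded coordinate map into $c_0$, exactly the strategy of the co-restriction Lemma~\ref{l:restriction-Ck} and of the implication (ii)$\Rightarrow$(i) in Theorem~\ref{t:Ascoli-Banach} (your threshold-$1$ bookkeeping with $|f_n(x_0)|<1/2$ for large $n$ plays the role of the paper's choice $M=2\sup\{|f(x_0)|+\e\}$). The only cosmetic caveat is that your truncation $\psi(t)=\sgn(t)$ for $|t|>1$ presumes real scalars; for complex-valued functions one should truncate by $t/|t|$ as in Lemma~\ref{l:technical}, or first pass to real and imaginary parts.
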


Observe that $c_0$ is a subspace of the the dual Banach space $\ell_\infty$ of $\ell_1$. Therefore a map $T:X\to c_0$ can be considered as a map from $X$ into the Banach dual $\ell_\infty$ of $\ell_1$. This remark,  the definition of $k_\IR$-spaces and Theorem \ref{t:Ck-locally-complete} motivate
the problem of finding a condition on $k$-continuous maps from a Tychonoff space $X$ into dual Banach spaces which characterizes Ascoli and sequentially Ascoli spaces. We solve this problem in Theorem  \ref{t:Ascoli-Banach}.



\section{Main Result} \label{seq:main}


We start from some necessary definitions and notations. Denote by $\IF$ the field of real or complex numbers.
Let $X$ be a Tychonoff space, and let $f:X\to\IF$ be a function. For a subset $A\subseteq X$ and $\e>0$, let
\[
\|f\|_A:=\sup(\{|f(x)|:x\in A\}\cup\{0\})\in [0,\infty].
\]
Recall that a subset $A\subseteq X$ is called {\em functionally bounded} if $\|f\|_A<\infty$ for any continuous function $f:X\to\IF$. We denote by $C(X)$ the space of continuous $\IF$-valued functions on $X$.  If $\e>0$, we set
\[
[A;\e]:=\{f\in C(X):\|f\|_A\le\e\}.
\]
A family $\mathcal{S}$ of subsets of  $X$ is {\em directed\/} if for any sets $A,B\in\mathcal{S}$, the union $A\cup B$ is contained in some set $C\in\mathcal{S}$. Each directed family $\mathcal{S}$ of functionally bounded sets in $X$ induces a locally convex topology $\TTT_{\mathcal{S}}$ on $C(X)$ whose neighborhood base at zero consists of the sets $[A;\e]$, where $A\in\mathcal{S}$ and $\e>0$. The topology $\TTT_{\mathcal{S}}$ is called {\em the topology of uniform convergence on sets of the family $\mathcal{S}$}. The topology $\TTT_{\mathcal{S}}$ is Hausdorff if and only if the union $\bigcup\mathcal{S}$ is dense in $X$. If $\mathcal{S}$ is the family of all compact subsets of $X$, then the topology $\TTT_{\mathcal{S}}$ will be denoted by $\TTT_k$, and the function space $C_{\TTT_{\mathcal{S}}}(X)$ will be denoted by $\CC(X)$. More generally, if $\TTT$ is a locally convex vector topology on $C(X)$ we put $C_\TTT(X):=\big(C(X),\TTT)$.

We need the following  technical lemma.
\begin{lemma} \label{l:technical}
Let $M>0$ and $0<\delta\leq \tfrac{1}{10} M$. If $z\in \IF$ satisfies $|z|>M-\delta$, then
\[
\Big| \tfrac{z+t}{|z+t|}- \tfrac{z}{|z|}\Big| < \tfrac{2\delta}{M} \;\; \mbox{ for every }\; |t|\leq |\delta|.
\]
\end{lemma}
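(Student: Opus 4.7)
The plan is to reduce the bound to a quadratic identity, since the naive triangle-inequality bound
$$\Big|\tfrac{z+t}{|z+t|} - \tfrac{z}{|z|}\Big| \leq \tfrac{2|t|}{|z+t|}$$
yields only $\tfrac{2\delta}{M-2\delta}$ under the hypotheses, which can be as large as $\tfrac{5\delta}{2M}$ when $\delta = M/10$ and is therefore too weak to beat $\tfrac{2\delta}{M}$. The extra room must come from squaring and exploiting a cancellation.

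First I would establish the identity
$$\Big|\tfrac{z+t}{|z+t|} - \tfrac{z}{|z|}\Big|^2 = \tfrac{|t|^2 - (|z|-|z+t|)^2}{|z|\,|z+t|}.$$
To get it, expand $|u-v|^2 = 2 - 2\mathrm{Re}(u\bar v)$ with $u=(z+t)/|z+t|$, $v=z/|z|$, so the expression equals $2 - \tfrac{2\mathrm{Re}((z+t)\bar z)}{|z|\,|z+t|}$. Then use $|z+t|^2 = |z|^2 + 2\mathrm{Re}(t\bar z) + |t|^2$ to rewrite
$$\mathrm{Re}((z+t)\bar z) = |z|^2 + \mathrm{Re}(t\bar z) = \tfrac{1}{2}\bigl(|z|^2 + |z+t|^2 - |t|^2\bigr),$$
plug in, and simplify; the displayed identity drops out after combining $|z|^2 + |z+t|^2 - 2|z|\,|z+t| = (|z|-|z+t|)^2$.

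Next I would drop the nonpositive term $-(|z|-|z+t|)^2$ and use $|t|\le\delta$ to get
$$\Big|\tfrac{z+t}{|z+t|} - \tfrac{z}{|z|}\Big|^2 \leq \tfrac{|t|^2}{|z|\,|z+t|} \leq \tfrac{\delta^2}{|z|\,|z+t|}.$$
Then I would bound the denominator using the hypotheses $|z|>M-\delta$, $|t|\le\delta$, and $\delta\le M/10$: the first gives $|z|>M-\delta \ge 9M/10$, and combined with the reverse triangle inequality we obtain $|z+t|\ge |z|-|t| > M-2\delta \ge 4M/5$. Hence
$$|z|\,|z+t| > (M-\delta)(M-2\delta) \geq \tfrac{9M}{10}\cdot\tfrac{4M}{5} = \tfrac{36}{50}M^2 > \tfrac{M^2}{4}.$$

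Substituting yields $\Big|\tfrac{z+t}{|z+t|} - \tfrac{z}{|z|}\Big|^2 < \tfrac{\delta^2}{M^2/4} = \tfrac{4\delta^2}{M^2}$, and taking square roots gives the claimed bound $\tfrac{2\delta}{M}$. The only real obstacle is recognizing that one must avoid linear bounds and pass through the squared identity; once the identity is in hand, the numerical hypothesis $\delta \le M/10$ gives substantial slack in bounding $|z|\,|z+t|$ from below by $M^2/4$.
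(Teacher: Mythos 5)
Your proof is correct, and it takes a genuinely different route from the paper. The paper splits into the real case (trivial, since $z$ and $z+t$ have the same sign) and the complex case, where it argues geometrically: the worst case occurs when the ray through $\tfrac{z+t}{|z+t|}$ is tangent to the disc of radius $\delta$ about $z$, which gives $\cos^2\varphi=1-\tfrac{\delta^2}{|z|^2}$ for the angle $\varphi$ between $z$ and $z+t$; it then bounds $|z|\,|1-\cos\varphi|\le \tfrac{\delta^2}{|z|}<\tfrac{\delta}{9}$ and finishes with a triangle-inequality estimate against the projection $z\cos\varphi$. Your argument replaces this extremal/trigonometric reasoning with the exact algebraic identity
\[
\Big|\tfrac{z+t}{|z+t|}-\tfrac{z}{|z|}\Big|^2=\tfrac{|t|^2-(|z|-|z+t|)^2}{|z|\,|z+t|},
\]
after which only the crude lower bound $|z|\,|z+t|>(M-\delta)(M-2\delta)\ge\tfrac{36}{50}M^2>\tfrac{M^2}{4}$ is needed. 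This buys uniformity (no case split between $\RR$ and $\mathbb{C}$, no appeal to a tangency configuration that would otherwise need justification) and actually yields the sharper constant $\tfrac{\delta}{\sqrt{0.72}\,M}\approx\tfrac{1.18\,\delta}{M}$; the paper's approach is more visual but relies on an informally justified extremal claim. Your opening observation --- that the linear bound $\tfrac{2|t|}{|z+t|}$ tops out at $\tfrac{5\delta}{2M}$ and so cannot work --- correctly identifies why squaring (or, in the paper's version, the second-order estimate $1-\cos\varphi\le 1-\cos^2\varphi$) is the essential step.
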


\begin{proof}
If $\IF=\IR$, then the lemma is trivial because the conditions on $\delta$ and $z$ imply $\tfrac{z+t}{|z+t|}= \tfrac{z}{|z|}$. Assume that $\IF=\mathbb{C}$.
Observe that the distance between the unit numbers $\tfrac{z+t}{|z+t|}$ and $\tfrac{z}{|z|}$ is maximal if and only if the angle $\varphi$ between them takes the maximal value and hence, taking into account that $|t|\leq\delta$, if and only if the ray $\lambda\cdot \tfrac{z+t}{|z+t|}, \lambda>0$, is tangent to the circle of radius $\delta$ centered at $z$. Then $|z|^2=|z+t|^2 +\delta^2$ and $\cos^2\varphi= \tfrac{|z+t|^2}{|z|^2}=1-\tfrac{\delta^2}{|z|^2}$. Therefore
\[
|z|\cdot |1-\cos\varphi|\leq |z|\cdot(1-\cos^2\varphi)=\tfrac{\delta^2}{|z|}=\delta\cdot \tfrac{\delta}{|z|}<\delta\cdot \tfrac{\tfrac{1}{10}M}{M-\tfrac{1}{10}M}=\tfrac{\delta}{9}.
\]
Observe also that $|z+t|\geq |z|-|t|>M-2\delta$. Hence
\[
\begin{aligned}
\Big| \tfrac{z+t}{|z+t|}- \tfrac{z}{|z|}\Big| &\leq \tfrac{|z+t-z\cdot\cos\varphi|}{|z+t|}\leq \tfrac{|z|\cdot |1-\cos\varphi|}{M-2\delta}+\tfrac{|t|}{M-2\delta}\\
& <\tfrac{1}{M-2\delta}\big( \tfrac{\delta}{9}+\delta\big)=\tfrac{10}{9} \cdot \tfrac{\delta}{M-2\delta}<\tfrac{10}{9} \cdot \tfrac{\delta}{M-\tfrac{2}{10}M}<\tfrac{2\delta}{M}.
\end{aligned}\Box
\]
\end{proof}

\begin{lemma} \label{l:restriction-Ck}
Let $X$ be a Tychonoff space, $\mathcal{S}$ be a  directed family of functionally bounded sets in $X$ such that $\bigcup\mathcal{S}$ is dense in $X$, and let $M>0$. Then the co-restriction map  $Q:C_{\TTT_{\mathcal{S}}}(X)\to C_{\TTT_{\mathcal{S}}}(X)$ defined by
\[
Q(f)(x):=\left\{
\begin{aligned}
M\cdot \tfrac{f(x)}{|f(x)|}, & \;\; \mbox{ if } \; |f(x)|> M,\\
f(x), & \;\; \mbox{ if } \; |f(x)|\leq M,
\end{aligned} \right.
\]
is continuous.
\end{lemma}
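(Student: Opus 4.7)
The plan is to verify continuity of $Q$ directly from the definition of the topology $\TTT_{\mathcal{S}}$. First, observe that $Q$ is well-defined as a map into $C(X)$: the radial truncation $\phi:\IF\to\IF$, sending $z$ to itself when $|z|\le M$ and to $M z/|z|$ otherwise, is continuous, and $Q(f)=\phi\circ f$. So it suffices to show continuity of $Q$ at an arbitrary $f_0\in C_{\TTT_{\mathcal{S}}}(X)$. Given a basic zero neighborhood $[A;\e]$ with $A\in\mathcal{S}$ and $\e>0$, I will set $\delta:=\min\{\e/2,\,M/10\}$ and prove the inclusion $Q\bigl(f_0+[A;\delta]\bigr)\subseteq Q(f_0)+[A;\e]$.

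The only real content is a pointwise estimate: for each $x\in A$, writing $z:=f_0(x)$ and $t:=g(x)-f_0(x)$ with $|t|\le\delta$, one must show $|\phi(z+t)-\phi(z)|\le 2\delta$. I would handle this by a case split according to whether $|z|$ and $|z+t|$ lie above or below the threshold $M$. When both are $\le M$, the map $\phi$ is the identity on the relevant values and the bound is $|t|\le\delta$. When both are $>M$ (which forces $|z|>M-\delta$), Lemma~\ref{l:technical} applies to $z$ and yields $|\phi(z+t)-\phi(z)|=M\bigl|\tfrac{z+t}{|z+t|}-\tfrac{z}{|z|}\bigr|<2\delta$ immediately. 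Summing this pointwise bound over $x\in A$ gives $\|Q(g)-Q(f_0)\|_A\le 2\delta\le\e$, as required.

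The main obstacle is the mixed regime, where $|z|$ and $|z+t|$ sit on opposite sides of $M$, since Lemma~\ref{l:technical} does not apply directly there. The resolution is the simple observation that if $|z|\le M<|z+t|$ then $0<|z+t|-M\le|t|$, and symmetrically in the other mixed case; combining this with a triangle-inequality estimate of the form $|\phi(z+t)-\phi(z)|\le\bigl||z+t|-M\bigr|+|t|$ (or its mirror) again gives the clean bound $2\delta$. This keeps the constants uniform in $x\in A$, so the single choice $\delta=\min\{\e/2,\,M/10\}$ works for the whole set $A$, completing the verification of continuity of $Q$.
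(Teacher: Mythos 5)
Your proof is correct, and its skeleton is the same as the paper's: reduce continuity to the inclusion $Q\bigl(f+[A;\delta]\bigr)\subseteq Q(f)+[A;\e]$, then establish the pointwise bound by a case split on whether $|f(x)|$ and $|f(x)+g(x)|$ lie above or below the threshold $M$, invoking Lemma~\ref{l:technical} when both exceed $M$. Where you genuinely diverge is in the two mixed cases. The paper handles these by again applying Lemma~\ref{l:technical} and then adding a correction term, which produces the messier bound $2\delta+\tfrac{2\delta}{M-\delta}$ and forces the smaller choice $\delta=\tfrac{1}{20}\min\{M,\e\}$. You instead observe that in the mixed regime the radial projection moves the point by exactly $\bigl||z+t|-M\bigr|$ (since $\bigl|M\tfrac{w}{|w|}-w\bigr|=\bigl||w|-M\bigr|$), and that this quantity is at most $|t|$ because $M$ is sandwiched between $|z|$ and $|z+t|$; the triangle inequality then gives the clean bound $2|t|\le 2\delta$ with no appeal to Lemma~\ref{l:technical} at all. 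This is both shorter and sharper: it isolates the geometric content of Lemma~\ref{l:technical} to the single case where it is actually needed (two points genuinely outside the disk, where only the angular displacement matters), and it lets you take the larger $\delta=\min\{\e/2,M/10\}$. The only cosmetic slip is the word ``summing'' where you mean taking the supremum over $x\in A$; the estimate itself is uniform in $x$, so the conclusion $\|Q(g)-Q(f_0)\|_A\le\e$ stands.
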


\begin{proof}
Let $K\in\mathcal{S}$ and $\e>0$. Set $\delta=\tfrac{1}{20}\min\{M,\e\}$. To prove that $Q$ is continuous it suffices to show that
\begin{equation} \label{equ:restriction-Ck-1}
Q\big( f+[K;\delta]\big) \subseteq Q(f)+ [K;\e] \;\; \mbox{ for every }\; f\in C(X).
\end{equation}
Fix $g\in [K;\delta]$ and $x\in K$. Consider the following possible cases.

Assume that $|f(x)+g(x)|\leq M$ and $|f(x)|\leq M$. Then
\[
|Q(f+g)(x)-Q(f)(x)|=|f(x)+g(x)-f(x)|=|g(x)|\leq\delta <\e.
\]

Assume that $|f(x)+g(x)|> M$. Then $|f(x)|\geq |f(x)+g(x)|-|g(x)|>M-\delta.$ Since $\delta < \tfrac{1}{10} M$ we can apply Lemma \ref{l:technical} for $z=f(x)$ and $t=g(x)$.

If $|f(x)+g(x)|> M$ and $|f(x)|> M$, then Lemma \ref{l:technical} implies
\[
|Q(f+g)(x)-Q(f)(x)|=\Big|M\cdot \tfrac{f(x)+g(x)}{|f(x)+g(x)|} -M\cdot \tfrac{f(x)}{|f(x)|} \Big|<M\cdot \tfrac{2\delta}{M}<\e.
\]

If $|f(x)+g(x)|> M$ and $|f(x)|\leq M$, then Lemma \ref{l:technical} implies
\[
\begin{aligned}
|Q(f+g)(x)-Q(f)(x)| &=\Big|M\cdot \tfrac{f(x)+g(x)}{|f(x)+g(x)|}-f(x)\Big|\\
& \leq \Big|M\cdot \tfrac{f(x)+g(x)}{|f(x)+g(x)|} -M\cdot \tfrac{f(x)}{|f(x)|} \Big|+ |f(x)|\cdot \Big| 1 - \tfrac{M}{|f(x)|}\Big|\\
& \leq M\cdot \tfrac{2\delta}{M} + M\cdot \Big| 1 - \tfrac{M}{M-\delta}\Big|=2\delta +\tfrac{2\delta}{M-\delta}<2\delta +M\cdot\tfrac{2\delta}{M-\tfrac{M}{20}}<\e.
\end{aligned}
\]

Assume that $|f(x)+g(x)|\leq M$ and $|f(x)|> M$. Then $|f(x)+g(x)|\geq |f(x)|-|g(x)|>M-\delta$.  Therefore,  for $z=f(x)+g(x)$ and $t=-g(x)$, Lemma \ref{l:technical} implies
\[
\begin{aligned}
|Q(f+g)(x)-Q(f)(x)| &=\Big|f(x)+g(x) - M\cdot \tfrac{f(x)}{|f(x)|}\Big|\\
& \leq \Big|M\cdot \tfrac{f(x)+g(x)}{|f(x)+g(x)|} -M\cdot \tfrac{f(x)}{|f(x)|} \Big|+ |f(x)+g(x)|\cdot \Big| 1 - \tfrac{M}{|f(x)+g(x)|}\Big|\\
& \leq M\cdot \tfrac{2\delta}{M} + M\cdot \Big| 1 - \tfrac{M}{M-\delta}\Big|=2\delta +\tfrac{\delta}{M-\delta}<2\delta +M\cdot\tfrac{\delta}{M-\tfrac{M}{20}}<\e.
\end{aligned}
\]
Consequently, in any possible case we have $|Q(f+g)(x)-Q(f)(x)|<\e$ for every $x\in K$ and each $g\in [K;\delta]$. Thus (\ref{equ:restriction-Ck-1}) is satisfied.\qed
\end{proof}

Let  $X$ be a Tychonoff space, and let $E$ be a locally convex space over the field $\IF$ of real or complex numbers. Denote by $E'$ the topological dual space of $E$ and set $E'_{w^\ast}:=\big(E',\sigma(E',E)\big)$, where $\sigma(E',E)$ is the weak$^\ast$ topology on $E'$. A map  $T:X\to E$  (resp.,  $T:X\to E'$) is called {\em weakly continuous} (resp., {\em weak$^\ast$ continuous}) if $T$ is continuous as a map from $T$ to $E_w$ (resp., to $E'_{w^\ast}$). Any map $T$ from $X$ into the algebraic dual $ E^\ast$ of $E$  defines an {\em associative  map} $T_E:X\times E\to \IF$ by
\[
T_E(x,z):=\langle T(x),z\rangle \quad \mbox{ where }\; x\in X \;\mbox{ and }\; z\in E.
\]
\begin{lemma} \label{l:functional-repr}
A map $T:X\to E'$ is weak$^\ast$ continuous if and only if the associative map $T_E$ is linear by the second argument and separately continuous. In this case the function $T_E(x,z):X\to \IF$ is continuous for every $z\in E$.
\end{lemma}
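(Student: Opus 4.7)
The statement is essentially an unfolding of the definition of the weak$^\ast$ topology, so the plan is to verify both implications by chasing what separate continuity and weak$^\ast$ continuity mean in terms of the pairing $\langle T(x),z\rangle$.

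First, recall that $\sigma(E',E)$ is by definition the coarsest topology on $E'$ making all evaluation functionals $\varphi\mapsto\langle\varphi,z\rangle$ (for $z\in E$) continuous, and hence a map $T:X\to E'_{w^\ast}$ is continuous if and only if, for every fixed $z\in E$, the composition $x\mapsto\langle T(x),z\rangle=T_E(x,z)$ is continuous from $X$ to $\IF$. This observation already gives the last assertion of the lemma and handles continuity of $T_E$ in the first variable.

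For the forward direction, I would assume $T$ is weak$^\ast$ continuous. Linearity of $T_E$ in the second argument is automatic because each $T(x)$ lies in $E'$, hence is a linear functional on $E$. Continuity of $T_E$ in the second argument (with $x$ fixed) is also automatic: $T(x)\in E'$ is by definition continuous on $E$, so $z\mapsto T_E(x,z)$ is continuous. Continuity in the first argument (with $z$ fixed) is exactly the unfolding of weak$^\ast$ continuity of $T$ given above. Together these give separate continuity of $T_E$.

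For the converse, I would start from the assumption that $T_E$ is linear in the second argument and separately continuous. Linearity in the second argument shows $T(x):E\to\IF$ is a linear functional for every $x\in X$; continuity of $z\mapsto T_E(x,z)$ (from separate continuity) shows this linear functional is continuous, so $T(x)\in E'$ as required. Finally, continuity of $x\mapsto T_E(x,z)$ for each fixed $z\in E$, together with the characterization of $\sigma(E',E)$ recalled at the outset, yields that $T:X\to E'_{w^\ast}$ is continuous. I do not anticipate any real obstacle: the argument is a pure definition-chase and no topological or analytic subtlety enters (in particular, no use of the earlier technical lemmas is needed here).
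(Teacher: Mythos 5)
Your proof is correct and follows essentially the same definition-chasing route as the paper; the only cosmetic difference is that you invoke the universal property of the initial topology $\sigma(E',E)$ abstractly, whereas the paper verifies weak$^\ast$ continuity directly on the subbasic neighborhoods $[F;\e]$ for finite $F\subseteq E$. Both arguments correctly note that in the converse direction the hypotheses are what guarantee $T(x)\in E'$ in the first place.
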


\begin{proof}
Assume that $T$ is weak$^\ast$ continuous. Then it is evident that $T_E$ is linear and  continuous by the second argument. To prove that $T_E$ is continuous also by the first argument, fix $x_0\in X$, $z_0\in E$ and $\e>0$. As $T$ is weak$^\ast$ continuous, there exists a neighbourhood $V$ of $x_0$ such that $|\langle T(x)-T(x_0),z_0\rangle|<\e$ for every $x\in V$. Then for every $x\in V$, we obtain
\[
|T_E(x,z_0)-T_E(x_0,z_0)|=|\langle T(x)-T(x_0),z_0\rangle|<\e
\]
which means that $T_E(x,z_0)$ is continuous at $x_0$.

Conversely, assume that  $T_E$ is linear by the second argument and separately continuous. The continuity and linearity of $T_E(x,\cdot)$ exactly means that $T(x)\in E'$. To show that $T$ is weak$^\ast$ continuous, fix $x_0\in X$, a finite $F\subseteq E$ and $\e>0$. Since $T_E(\cdot,z)$ is continuous at $x_0$ for every $z\in F$, there is a neighborhood $V$ of $x_0$ such that
\[
|\langle T(x)-T(x_0),z\rangle|=|T_E(x,z)-T_E(x_0,z)|<\e \;\;\mbox{ for every } x\in V \mbox{ and each }  z\in F,
\]
which means that $T(x)-T(x_0)\in [F;\e]$. Thus $T$ is weak$^\ast$ continuous.\qed
\end{proof}

The next notions are well-defined by Lemma \ref{l:functional-repr}.
\begin{definition} \label{def:almost-T-compact} {\em
Let $X$ be a Tychonoff space, $\TTT$ be a locally convex vector topology on $C(X)$, and let $E$ be a locally convex space. We shall say that a continuous map $T:X\to E'_{w^\ast}$ is
\begin{enumerate}
\item[$\bullet$] {\em $\TTT$-compact} if there is a neighborhood $U$ of zero in $E$ such that the family
$
\{ T_E(x,a):\; a\in U\}
$
is relatively compact in $C_\TTT(X)$;
\item[$\bullet$]  {\em almost $\TTT$-compact} (resp., {\em almost $\TTT$-sequential}) if there are a neighborhood $U$ of zero in $E$ and a compact subset (resp., a null sequence) $K$ of $C_\TTT(X)$ such that the family $\{ T_E(x,a):\; a\in U\}$ is contained in the absolutely convex closed hull $\cacx(K)$ of $K$.
\end{enumerate}
If $\TTT=\TTT_k$ we shall say that $T$ is ({\em almost}) {\em $k$-compact} or  ({\em almost})  {\em $k$-sequential}, respectively.\qed}
\end{definition}

If $E$ is a Banach space, we denote by $B_E$ the closed unit ball of $E$. The Banach dual space of $E$ is denoted by $E'_\beta$.
Recall also that for any set $\Gamma$, $\ell_\infty(\Gamma)$ is the Banach dual of the Banach space $\ell_1(\Gamma)$.
Now we are ready to prove the main result of the article.
\begin{theorem} \label{t:Ascoli-Banach}
For a Tychonoff space $X$, the following assertions are equivalent:
\begin{enumerate}
\item[{\rm(i)}] $X$ is an Ascoli {\rm(}resp., sequentially Ascoli{\rm)} space;
\item[{\rm(ii)}] for each  cardinal $\Gamma$, every  $k$-continuous and almost $k$-compact  {\rm(}resp., almost $k$-sequential{\rm)} map $T:X\to \ell_{\infty}(\Gamma)$ is continuous;
\item[{\rm(iii)}]  for each  Banach space $E$, every $k$-continuous  and almost $k$-compact  {\rm(}resp., almost $k$-sequential{\rm)} map $T:X\to E'_{\beta}$ is continuous.
\end{enumerate}
\end{theorem}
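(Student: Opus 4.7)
The plan is to establish the cycle $(i) \Rightarrow (iii) \Rightarrow (ii) \Rightarrow (i)$. The implication $(iii) \Rightarrow (ii)$ is immediate since $\ell_\infty(\Gamma)$ is the Banach dual of $\ell_1(\Gamma)$ for every set $\Gamma$, so the hypothesis of $(iii)$ specializes directly.

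For $(i) \Rightarrow (iii)$, I fix a $k$-continuous, almost $k$-compact (resp., almost $k$-sequential) map $T : X \to E'_\beta$. By the definition of almost $k$-compactness there exist a neighborhood $U$ of zero in $E$ and a compact subset (resp., null sequence) $K \subseteq \CC(X)$ with $\{T_E(\cdot,a) : a \in U\} \subseteq \cacx(K)$. Since $X$ is (sequentially) Ascoli, $K$ is equicontinuous; equicontinuity is preserved by finite absolutely convex combinations and by pointwise limits, so $\cacx(K)$ and therefore $\{T_E(\cdot,a) : a \in U\}$ are equicontinuous families in $C(X)$. For $x_0 \in X$ and $\varepsilon > 0$, a neighborhood $V$ of $x_0$ witnessing this equicontinuity delivers $\sup_{a \in U}|\langle T(x)-T(x_0),a\rangle| \leq \varepsilon$ for every $x \in V$. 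Since $U$ absorbs $B_E$ (say $B_E \subseteq cU$), this yields $\|T(x)-T(x_0)\|_{E'} \leq c\varepsilon$, proving continuity into $E'_\beta$.

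For $(ii) \Rightarrow (i)$, I argue contrapositively. Suppose $X$ fails to be Ascoli (resp., sequentially Ascoli); then there is a compact subset (resp., a convergent sequence, which after subtracting its limit may be taken to be a null sequence) $K \subseteq \CC(X)$ failing equicontinuity at some $x_0 \in X$. Set $\Gamma := K$ (resp., $\Gamma := \NN$) and define $T : X \to \ell_\infty(\Gamma)$ by $T(x) := (f(x))_{f\in K}$. Point-evaluations being continuous on $\CC(X)$ guarantees that $T(x)$ is indeed bounded, so $T(x) \in \ell_\infty(\Gamma)$. For each compact $L \subseteq X$, the restriction map carries $K$ to a compact subset of the Banach space $C(L)$, which is equicontinuous by the classical Arzelà--Ascoli theorem, and this translates at once into norm-continuity of $T|_L$, showing that $T$ is $k$-continuous. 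Taking $U := B_{\ell_1(\Gamma)}$, for each $a = (a_f) \in U$ the series $\sum_f a_f f$ converges in $\CC(X)$ (by compact-boundedness of $K$) and its sum equals $T_{\ell_1(\Gamma)}(\cdot,a)$, manifestly in $\cacx(K)$; hence $T$ is almost $k$-compact (resp., almost $k$-sequential) witnessed by $K$. By $(ii)$, $T$ is continuous, and continuity at $x_0$ unpacks as $\sup_{f\in K}|f(x)-f(x_0)| \to 0$ as $x \to x_0$, i.e., equicontinuity of $K$ at $x_0$, a contradiction.

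The main obstacle I anticipate is technical: verifying that the family $\{T_E(\cdot,a):a\in U\}$ is genuinely composed of continuous functions and that $\cacx(K)$ preserves equicontinuity cleanly in the compact-open topology, without ambiguity between pointwise and uniform-on-compacta closures. To make the construction in $(ii) \Rightarrow (i)$ as robust as possible---so that $T$ lands in a bounded subset of $\ell_\infty(\Gamma)$ and the absolutely convex hull behaves well---I plan to replace $K$ by $Q(K)$, where $Q$ is the radial truncation of Lemma \ref{l:restriction-Ck} applied at a level $M$ large enough that $Q$ acts as the identity on the values realizing the failure of equicontinuity at $x_0$; Lemma \ref{l:technical} supplies the continuity estimate that keeps $Q(K)$ compact in $\CC(X)$, and the uniform bound $\|Q(f)\|_\infty \leq M$ makes the reconstructed map $T$ bounded without changing the obstruction to equicontinuity.
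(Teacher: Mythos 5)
Your proposal is correct, and the hard implication (ii)$\Ra$(i) is essentially the paper's argument: the same map $T(x)=(f(x))_{f\in K}$ into $\ell_\infty(K)$, the same verification of $k$-continuity via Arzel\`a--Ascoli on compact subsets, the same witnessing of almost $k$-compactness by $B_{\ell_1(K)}$, and---crucially---the same preliminary replacement of $K$ by $Q(K)$ using Lemma \ref{l:restriction-Ck}. You are right that this truncation is not cosmetic: without the uniform bound $\|f\|_X\le M$, the series $\sum_f a_f f$ is only Cauchy uniformly on compacta, and its pointwise sum need not be continuous when $X$ is not a $k_\IR$-space, so both the weak$^\ast$ continuity of $T$ and the containment of $\{T_E(\cdot,a):a\in B_{\ell_1(K)}\}$ in $\cacx(K)\subseteq C(X)$ would be in doubt. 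Where you genuinely diverge from the paper is in the easy leg: you prove (i)$\Ra$(iii) directly, using that $K$ equicontinuous implies $\cacx(K)$ equicontinuous (closure in $\TTT_k$ being finer than pointwise closure) and that the zero-neighborhood $U$ absorbs $B_E$, which gives norm-continuity of $T$ into $E'_\beta$ at once. The paper instead proves (i)$\Ra$(ii) for $\ell_\infty(\Gamma)$ only and then derives (iii) from (ii) by writing $E$ as an isometric quotient of some $\ell_1(\Gamma)$ and embedding $E'_\beta$ into $\ell_\infty(\Gamma)$ via the adjoint. Your route is slightly more economical in that it dispenses entirely with the universality of $\ell_1(\Gamma)$ and the adjoint-embedding bookkeeping; the paper's route keeps the Ascoli-to-continuity step in the concrete coordinate setting of $\ell_\infty(\Gamma)$, where the equicontinuous family is just $\{\langle T(\cdot),\delta_\gamma\rangle\}$. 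Both are valid, and your one loosely phrased point---that $M$ should be ``large enough that $Q$ acts as the identity on the values realizing the failure of equicontinuity''---should be sharpened as in the paper to $M=2\sup\{|f(x_0)|+\e: f\in K\}$, since the values $f_U(x_U)$ themselves need not be bounded; what matters is that truncation at this level cannot destroy an oscillation of size $\e$ based at $x_0$.
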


\begin{proof}
(i)$\Ra$(ii) Let $T:X\to \ell_{\infty}(\Gamma)$ be a  $k$-continuous and almost $k$-compact  {\rm(}resp., almost $k$-sequential{\rm)} map. We have to show that $T$ is continuous. 
For every $\gamma\in\Gamma$, let $\delta_\gamma=(a_i)\in \ell_{1}(\Gamma)$ be such that $a_i=1$ if $i=\gamma$, and $a_i=0$ otherwise. Set $f_\gamma:=\langle T(x),\delta_\gamma\rangle$. Since $T$ is almost $k$-compact  {\rm(}resp., almost $k$-sequential{\rm)}, we have the following: (a) the map $T$ is weak$^\ast$ continuous and hence all functions $f_\gamma$ are continuous, and (b) there is a compact subset (resp., a null-sequence) $\tilde K$ of $\CC(X)$ such that the family $K_0:=\{f_\gamma: \gamma\in\Gamma\}$ is contained in the absolutely convex closed hull $\cacx(\tilde K)$ of $\tilde K$.  As $X$ is (resp., sequentially) Ascoli, $\tilde K$ and hence also $\cacx(\tilde K)$ and $K_0$ are equicontinuous. Fix an arbitrary $x_0\in X$, and let $\e>0$. Since $K_0$ is equicontinuous, there is a neighborhood $V$ of $x_0$ such that
\[
|f_\gamma(x)-f_\gamma(x_0)|\leq \e \;\; \mbox{ for all }\; x\in V \; \mbox{ and each }\; \gamma\in\Gamma.
\]
Then, for every $x\in V$, we have
\[
\|T(x)-T(x_0)\|_{\ell_{\infty}(\Gamma)}=\sup\{ |f_\gamma(x)-f_\gamma(x_0)|:\; \gamma\in\Gamma\} \leq \e
\]
which means that $T(x)-T(x_0)\in\e B_{\ell_{\infty}(\Gamma)}$. Thus $T$ is continuous.
\smallskip

(ii)$\Ra$(iii) Let $E$ be a Banach space, and let $T:X\to E'$ be a $k$-continuous and almost $k$-compact  {\rm(}resp., almost $k$-sequential{\rm)} map. It is well known that there is a set $\Gamma$ such that $E$ is isometric to a quotient space of $\ell_{1}(\Gamma)$. Let $S:\ell_{1}(\Gamma) \to E$ be an isometric quotient map, so $S(B_{\ell_{1}(\Gamma)})=B_E$. It follows that the adjoint map $S^\ast:E'_\beta \to \ell_{\infty}(\Gamma)$ is an embedding. Then the map $S^\ast\circ T$ is $k$-continuous and weak$^\ast$ continuous (because $S^\ast$ and $T$ are weak$^\ast$ continuous). The map $S^\ast\circ T$ is also an almost $k$-compact  {\rm(}resp., $k$-sequential{\rm)} map because if $K$ is a compact subset (resp., a null-sequence) in $\CC(X)$ such that $\{T_E(x,a):a\in B_E\} \subseteq \cacx(K)$, then
\[
\Big\{(S^\ast\circ T)_E(x,a):a\in B_{\ell_{1}(\Gamma)}\Big\}= \Big\{ \langle T(x),S(a)\rangle: a\in B_{\ell_{1}(\Gamma)}\Big\} = \{T_E(x,a):a\in B_E\} \subseteq \cacx(K).
\]
Therefore, by (ii), $S^\ast\circ T$ is continuous. Since $S^\ast$ is an embedding, we obtain that also $T$ is continuous.
\smallskip

(iii)$\Ra$(ii) is trivial.
\smallskip

(ii)$\Ra$(i) Suppose for a contradiction that $X$ is not an Ascoli space (resp., a sequentially Ascoli space). Then there is a compact set (resp., a null sequence) $K$ in $\CC(X)$ which is not equicontinuous at some point $x_0\in X$. Therefore there is $\e>0$ such that for every neighborhood $U$ of $x_0$ there exists a function $f_U\in K$ and a point $x_U\in U$ such that $|f_U(x_U)-f_U(x_0)|\geq \e$. Set $M:=2\sup\{|f(x_0)|+\e: f\in K\}$. Denote by $Q:\CC(X)\to\CC(X)$ the co-restriction map defined in Lemma \ref{l:restriction-Ck}. Then, by Lemma \ref{l:restriction-Ck},  $Q$ is continuous. Therefore $Q(K)$ is a compact subset of $\CC(X)$ such that
\begin{itemize}
\item[(a)] $\|R\big(Q(f)\big)\|_X \leq M$ for every $f\in K$;
\item[(b)] $R\big(Q(K)\big)$ is not equicontinuous at $x_0$ by the definition of $M$.
\end{itemize}
Hence, replacing $K$ by $Q(K)$, we can assume  that the following conditions hold:
\begin{itemize}
\item[(c)] there is $M>0$ such that $\|f\|_X \leq M$ for every $f\in K$.
\item[(d)] for every neighborhood $U$ of $x_0$ there exists a function $f_U\in K$ and a point $x_U\in U$ such that $|f_U(x_U)-f_U(x_0)|\geq \e$.
\end{itemize}

By (c) we can define a map $T:X\to \ell_\infty (K)$ by $T(x):=\big(f(x)\big)_{f\in K}$.
\smallskip

{\em Claim 1. The map $T$ is $k$-continuous.} Indeed, let $A$ be a compact subset of $X$, and let $x_0\in A$.
Denote by $R_A:\CC(X)\to C(A)$ the restriction map defined by $R_A(f):=f{\restriction}_A$. It is clear that $R_A$ is continuous. Therefore $R_A(K)$ is a compact subset of the Banach space $C(A)$. Since $A$ is an Ascoli space, it follows that $R_A(K)$ is equicontinuous. Hence for every $\e>0$, there is a neighborhood $U_\e(x_0)\subseteq X$ of $x_0$ such that
\begin{equation} \label{equ:Ascoli-char-1}
|f(x)-f(x_0)|<\e \;\; \mbox{ for every } \; x\in U_\e(x_0)\cap A \; \mbox{ and each } \; f\in K.
\end{equation}
Then (\ref{equ:Ascoli-char-1}) implies
\[
T(x)-T(x_0)=\big(f(x)-f(x_0)\big)_{f\in K} \in \e B_{\ell_\infty(K)} \;\; \mbox{ for every }\; x\in U_\e(x_0)\cap A,
\]
which means that $T{\restriction}_A$ is continuous.
\smallskip

{\em Claim 2. The map $T$ is weak$^\ast$ continuous.} Indeed, fix an arbitrary $x_0\in X$. Let $\mu=\sum_{n\in\w} a_n \delta_{f_n} \in \ell_1(K)$, where $f_n\in K$ and $f_n\not= f_m$ for distinct $n,m\in\w$,  and let $\e>0$. Choose $N\in\w$ such that
\begin{equation} \label{equ:Ascoli-char-2}
\sum_{n>N} |a_n| <\tfrac{\e}{4M}.
\end{equation}
Take a neighborhood $V(x_0)$ of $x_0$ such that
\begin{equation} \label{equ:Ascoli-char-3}
|f_n(x)-f_n(x_0)|<\tfrac{\e}{2\cdot\|\mu\|+2} \;\; \mbox{ for each } \; x\in V(x_0)\; \mbox{ and all } \; n\leq N.
\end{equation}
Then for every $x\in V(x_0)$, (\ref{equ:Ascoli-char-2}) and (\ref{equ:Ascoli-char-3}) and (c) imply
\[
\begin{aligned}
\big|\langle\mu,T(x)-T(x_0)\rangle & = \Big| \sum_{n\in\w} a_n\big(f_n(x)-f_n(x_0)\big)\Big|\\
& \leq \sum_{n\leq N} |a_n|\cdot \tfrac{\e}{2\cdot\|\mu\|+2} +\sum_{n>N} |a_n|\cdot 2M <\tfrac{\e}{2}+\tfrac{\e}{2}=\e,
\end{aligned}
\]
which means that $T$ is weak$^\ast$ continuous, as desired.
\smallskip

\smallskip

{\em Claim 3. The map $T$ is almost $k$-compact} (resp., {\em almost $k$-sequential}). Indeed, the definition of the Banach space $\ell_1(K)$ implies that  the set
\[
\{ T_E(x,a):\; a\in B_{\ell_1(K)}\}= \{ \langle T(x),a\rangle: \; a\in B_{\ell_1(K)}\}
\]
is the absolutely convex hull of the set
\[
\{ \langle T(x),\delta_f\rangle: \; f \in K\} = K
\]
which is a compact set (resp., a null sequence) in $\CC(X)$. The claim is proved.
\smallskip

Taking into account Claims 1--3, (ii) implies that the map $T:X\to \ell_\infty (K)$ is continuous. In particular, there  is a neighborhood $U$ of the point $x_0$ such that
\[
T(x)-T(x_0) \in \tfrac{\e}{2} B_{\ell_\infty (K)} \;\; \mbox{ for every }\; x\in U,
\]
which means that $|f(x)-f(x_0)|\leq \tfrac{\e}{2}$ for every $f\in K$. But this contradicts the condition (d).\qed
\end{proof}


\bibliographystyle{amsplain}

\end{document}